\def\hpq0{h^{p,q}_{\leq 0}}
\def\Hpq0{\H_{\leq 0}^{p,q}}
\def\H{{\mathcal H}}
\def\be{\begin{equation}}
\def\ee{\end{equation}}
\newtheorem{thm}{Theorem}[section]
\newtheorem{prop}[thm]{Proposition}
\theoremstyle{definition}
\theoremstyle{remark}
\newtheorem{preremark}{Remark}
\newtheorem{preex}{Example}
\numberwithin{equation}{section}
\title[]
{A comparison principle for Bergman kernels.}
\address{Department of Mathematics\\Chalmers University
  of Technology \\
 S-412 96
  G\"OTEBORG\\SWEDEN} 
\email{ bob@chalmers.se}
\author[]{Bo Berndtsson}
\begin{document}

\begin{abstract}
We give a version of the comparison principle from pluripotential theory where the Monge-Amp\`ere measure is replaced by the Bergman kernel and use it to derive a maximum principle.
\end{abstract}

{\it Dedicated to the memory of Mikael Passare}

\bigskip

\maketitle

\section{Introduction.}

Let $\phi$ and $\psi$ be two plurisubharmonic functions in a complex manifold $X$, and let $\Omega$ be a relatively compact subdomain in $X$. Assume that on the boundary of $\Omega$, $\phi\leq\psi$, and that inside the domain the Monge-Ampere measures of $\phi$ and $\psi$ satisfy
$$
(dd^c\phi)^n\geq (dd^c\psi)^n.
$$
Then the maximum principle for the Monge-Ampere equation asserts that the inequality $\phi\leq \psi$ holds inside the domain $\Omega$ too. (Here of course both the inequality between $\phi$ and $\psi$ on the boundary and the Monge-Ampere equation has to be given a precise meaning.) The maximum principle is easy to prove if the functions are sufficiently smooth, e g of class $\mathcal{C}^2$. For non regular functions the maximum principle can be derived from the so called comparison principle (see \cite{Kol}) of Bedford and Taylor, which also serves as a substitute for the maximum principle in some cases.  The comparison principle  states (again omitting precise assumptions) that
$$
\int_{\{\psi<\phi\}}(dd^c\phi)^n\leq \int_{\{\psi<\phi\}}(dd^c\psi)^n .
$$

On the other hand it is well known that Monge-Amp\`ere measures often can be approximated by measures defined by {\it Bergman functions}.   Suppose that we have given on our manifold $X$ a positive measure, $\mu$, and consider the $L^2$-space of holomorphic functions 
$$
A^2=A^2(X,\mu,\phi)=\{h\in H(X); \int |h|^2e^{-\phi} d\mu <\infty\},
$$
or its closure in $L^2(X,\mu,\phi)$.
We denote by $K_\phi(z,\zeta)$ the Bergman kernel for $A^2$ and let 
$$
B_\phi(z)=K_\phi(z,z)e^{-\phi}
$$
be the  Bergman function, also known as the {\it density of states function}. It is a consequence of the asymptotic expansion formula of Tian-Catlin-Zelditch (see \cite{Zeld}) that we have
$$
\lim_{k\rightarrow \infty} k^{-n}B_{k\phi} d\mu =c_n (dd^c\phi)^n
$$
if $\phi$ is plurisubharmonic and $\phi$ and $\mu$ are sufficiently regular. We can therefore  think of  $B_\phi d\mu$ as an approximation, or perhaps quantization,  of the Monge-Ampere measure of $\phi$. 

The main observation in this note is that a version of  the comparison principle  holds if we replace the Monge-Ampere operator by the density of states function, so that   
$$
\int_{\psi<\phi} B_\phi d\mu\leq \int_{\psi<\phi} B_\psi d\mu.
$$
As it turns out, this is an almost completely formal phenomenon, and it holds under very (but not completely) general circumstances. In particular, the plurisubharmonicity of $\phi$ and $\psi$ plays no role at all, and even the holomorphicity of functions in $A^2$ enters only in a very weak form, so similar results also  hold in many other situations when we have a well behaved Bergman kernel, and  also if we consider sections of line bundles instead av scalar valued functions. However, the setting of plurisubharmonic weights and holomorphic functions allows a slightly stronger statement with strict inequality, and in that context our main theorem is as follows.
\begin{thm} Let $L$ be a holomorphic line bundle over a complex manifold $X$, and let $\phi$ and $\psi$ be two, possibly singular, metrics on $L$. Suppose that $dd^c\phi\geq -\omega$ and $dd^c\psi\geq -\omega$ for some smooth hermitean $(1,1)$-form $\omega$. Assume also that for some constant $C$, $\phi\leq \psi +C$ and that $\mu$ is given by a strictly positive continuous volume form.
Then
\be
\int_{\psi<\phi} B_\phi d\mu\leq \int_{\psi<\phi} B_\psi d\mu.
\ee
Moreover, if $\emptyset\neq \{\psi<\phi\}\neq X$, strict inequality holds unless both sides are zero or infinity. 
\end{thm}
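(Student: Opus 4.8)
The plan is to exploit the reproducing property of the Bergman kernel together with the extremal characterization $B_\phi(z) = \sup\{|h(z)|^2 e^{-\phi(z)} : \|h\|_{A^2(X,\mu,\phi)}\le 1\}$. The starting point is the elementary pointwise inequality: on the set $\{\psi<\phi\}$ we have $e^{-\phi}\le e^{-\psi}$, hence any $h$ that is $L^2$ against $e^{-\psi}d\mu$ is, after restriction to $\{\psi<\phi\}$, dominated by its $e^{-\psi}$-norm. Concretely, I would first establish the integrated inequality $\int_{\psi<\phi} B_\phi\, d\mu \le \int_{\psi<\phi} B_\psi\, d\mu$ by the following device. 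Pick an orthonormal basis $(h_j)$ of $A^2(X,\mu,\psi)$, so that $B_\psi = \sum_j |h_j|^2 e^{-\psi}$. Each $h_j$, restricted to (a neighborhood of) $\overline{\{\psi<\phi\}}$, satisfies $\int_{\psi<\phi}|h_j|^2 e^{-\phi}\,d\mu \le \int_{\psi<\phi}|h_j|^2 e^{-\psi}\,d\mu \le 1$, because $\phi\ge\psi$ there. This suggests comparing $B_\phi$ on $\{\psi<\phi\}$ with the Bergman function of the space $A^2(\{\psi<\phi\},\mu,\phi)$ of holomorphic functions on the open set $\{\psi<\phi\}$; by the domain-monotonicity of Bergman kernels (a larger $L^2$-space gives a larger kernel, and restricting a global section to the subdomain only decreases the norm), we get $B_\phi \le B_\phi^{\{\psi<\phi\}}$ on $\{\psi<\phi\}$. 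So it suffices to prove
\be
\int_{\psi<\phi} B_\phi^{\{\psi<\phi\}}\, d\mu \le \int_{\psi<\phi} B_\psi\, d\mu,
\ee
which is now a statement on the single open set $U:=\{\psi<\phi\}$ comparing two weights $\phi\ge\psi$ on $U$. But on $U$ with a fixed ambient domain, $\phi\ge\psi$ forces $A^2(U,\mu,\phi)\subseteq A^2(U,\mu,\psi)$ with norm-decreasing inclusion, which is exactly the situation where the trace inequality $\int_U B_\phi^U d\mu \le \int_U B_\psi^U d\mu$ holds: writing $B_\psi^U = \sum |g_j|^2 e^{-\psi}$ for an ON basis of $A^2(U,\mu,\psi)$ and noting $B_\phi^U(z) = \sup_{\|h\|_{\phi,U}\le 1}|h(z)|^2 e^{-\phi(z)}$, one integrates and uses that the $e^{-\phi}$-unit ball sits inside the $e^{-\psi}$-unit ball. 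The cleanest route is Bergman's formula $\int_U B_\phi^U\,d\mu = \dim A^2(U,\mu,\phi)$ in the finite-dimensional case (and a limiting/truncation argument otherwise), together with the inclusion $A^2(U,\mu,\phi)\hookrightarrow A^2(U,\mu,\psi)$: the dimension only drops, giving the inequality directly when dimensions are finite, and the same monotonicity passes to the general case by exhausting with finite-dimensional subspaces.

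For the strict inequality under the hypotheses $\emptyset\ne U\ne X$ and $dd^c\phi,dd^c\psi\ge-\omega$, $\mu$ a strictly positive continuous volume form, I would argue by contradiction assuming equality with both sides finite and nonzero. Equality would force $B_\phi = B_\phi^U$ a.e.\ on $U$, which by the extremal characterization and the strict positivity of $\mu$ means that every extremal competitor on $U$ extends to a global section of the same norm; more usefully, it would force the inclusion $A^2(X,\mu,\phi)\to A^2(U,\mu,\psi)$ (restrict, then enlarge the weight) to be \emph{norm-preserving} on a spanning set, i.e.\ $\int_{X\setminus U}|h|^2 e^{-\phi}d\mu = 0$ and $\phi=\psi$ a.e.\ on $U$ against $|h|^2 d\mu$, for $h$ ranging over an ON basis of $A^2(X,\mu,\phi)$. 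Since $\mu$ is strictly positive and the $h$ are holomorphic (hence their common zero set is pluripolar, in particular of measure zero, unless $A^2(X,\mu,\phi)=0$), this yields $\phi=\psi$ a.e.\ on $U$, contradicting $U=\{\psi<\phi\}$ (where $\psi<\phi$ pointwise) unless $U$ has measure zero — but $U$ is open and $\mu$ strictly positive, so $\mu(U)>0$, a contradiction. The boundary condition $\phi\le\psi+C$ and the curvature bounds $dd^c\phi\ge-\omega$ enter to guarantee that $A^2(X,\mu,\phi)$ and $A^2(X,\mu,\psi)$ are rich enough (nontrivial, and with enough sections to separate points locally), which is what rules out the degenerate case and makes the zero-set-has-measure-zero step legitimate; this is also why we need "unless both sides are zero or infinity" as the sole exception.

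The main obstacle I anticipate is the passage from the finite-dimensional Bergman-trace identity to the general (possibly infinite-dimensional) setting while keeping strictness: one must exhibit, for the equality case, a single section (or a countable family) realizing the deficit, and control that the restriction map $A^2(X,\mu,\phi)\to A^2(U,\mu,\psi)$ does not accidentally become an isometry on all of $A^2(X,\mu,\phi)$ — this requires knowing the map is defined and bounded, which uses $\phi\le\psi+C$ on all of $X$ (so that $e^{-\phi}\le e^C e^{-\psi}$ globally and the map $A^2(X,\mu,\phi)\to A^2(X,\mu,\psi)$ is bounded), and then the genuine inequality $e^{-\phi}<e^{-\psi}$ on the open set $U$ does the rest. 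A secondary technical point is making sense of $B_\phi$ for singular metrics $\phi$: the curvature lower bound $dd^c\phi\ge-\omega$ ensures $e^{-\phi}$ is locally integrable after twisting by a smooth positive factor, so $A^2$ is a well-defined Hilbert space of honest holomorphic functions and the extremal/reproducing formalism applies verbatim.
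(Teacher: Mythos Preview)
Your reduction to the subdomain $U=\{\psi<\phi\}$ does not work, because the inequalities point the wrong way at two separate places.

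First, on $U$ you have $\phi>\psi$, hence $e^{-\phi}<e^{-\psi}$, hence $\|h\|_{\phi,U}\le \|h\|_{\psi,U}$ for every $h$. This means $A^2(U,\mu,\psi)\subseteq A^2(U,\mu,\phi)$, \emph{not} the inclusion you wrote. Consequently the dimension argument gives
\[
\int_U B_\phi^{\,U}\,d\mu=\dim A^2(U,\mu,\phi)\ \ge\ \dim A^2(U,\mu,\psi)=\int_U B_\psi^{\,U}\,d\mu,
\]
the reverse of what you need. Second, domain monotonicity gives $B_\phi\le B_\phi^{\,U}$ and $B_\psi\le B_\psi^{\,U}$ on $U$; replacing $B_\phi$ by the larger $B_\phi^{\,U}$ only weakens the left-hand side, and you have no way to come back down to the global $B_\psi$. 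A concrete obstruction: take $X=\C$ with Lebesgue measure, $\psi=|z|^2$, and $\phi=|z|^2+\chi_D$ with $D$ the unit disk (so $U=D$). Then $A^2(D,\mu,\phi)$ is infinite-dimensional, so $\int_D B_\phi^{\,D}\,d\mu=\infty$, while $\int_D B_\psi\,d\mu$ is finite; your intermediate inequality $\int_U B_\phi^{\,U}\,d\mu\le\int_U B_\psi\,d\mu$ is simply false.

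The paper's proof is entirely different and does not localize. One interpolates $\phi_t=\phi+t(\psi-\phi)$, sets $u=\psi-\phi$, and differentiates $G(t)=\int_{\{u<0\}}B_{\phi_t}\,d\mu$. Using the reproducing property one obtains the symmetric formula
\[
G'(t)=\int\!\!\int_{\{u(z)<0<u(w)\}}\bigl(u(w)-u(z)\bigr)\,|K_t(z,w)|^2\,e^{-\phi_t(z)-\phi_t(w)}\,d\mu(z)\,d\mu(w)\ \ge\ 0,
\]
which gives the inequality at once. Strictness comes from this same formula: under the curvature lower bounds the set $\{u(z)<0<u(w)\}$ is plurifine-open, hence of positive $\mu\times\mu$-measure, and $K_t(z,w)$ is holomorphic in $(z,w)$, so the integrand is positive a.e.\ there unless $K_t\equiv 0$. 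The hypothesis $\phi\le\psi+C$ is used to reduce to bounded $\phi-\psi$ and to justify the differentiation; plurisubharmonicity enters only for the plurifine-topology step in the strict part.
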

A few remarks are in order. The strict inequality is of less importance when we deal with Monge-Amp\`ere measures, since one can often arrange that by an ad hoc small perturbation. For Bergman kernels this is less clear and that is the reason why we mention the (very weak) conditions for strict inequality. The condition that $\phi\leq \psi+C$ is sometimes phrased as '$\psi$ is less singular than $\phi$', and some condition like that is necessary. Indeed, if $\psi<\phi$ everywhere and we assume $X$ compact, the two integrals equal the dimensions of the space of sections of $L$ that are square integrable
with respect to the respective metrics. If $\psi$ is more singular than $\phi$ it may well happen that the space of sections that have finite norm measured by $\psi$ is smaller than the space of sections that have finite norm measured by $\phi$, so the inequality cannot hold.

\section{The abstract setting}
We will first deal with the abstract setting of general $L^2$-spaces with a Bergman kernel. Let $(X, \mu)$ be a measure space, let $e^{-\phi}$ be a measurable weight function on $X$,  and let $\H_\phi$ be a Hilbert subspace of  $L^2(e^{-\phi}d\mu)$. We assume that for any $z\in X$,  point evaluation at $z$ is a bounded linear functional on $\H_\phi$. Then $\H_\phi$ has a Bergman kernel, $K_\phi(z,\zeta)$ and we denote $B_\phi(z)=K_\phi(z,z)e^{-\phi}$.

\begin{thm}(Comparison principle for Bergman spaces.)  Let $\phi$ and $\psi$ be two weight functions on $X$ such that for some constant $C$, $\phi\leq \psi +C$.  Then 
\be
\int_{\psi<\phi} B_\phi d\mu \leq\int_{\psi<\phi} B_\psi d\mu.
\ee
\end{thm}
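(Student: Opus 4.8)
The plan is to reduce the statement, via the extremal description of the Bergman function, to a finite-dimensional trace inequality. For any weight $\chi$, any $\chi$-orthonormal basis $\{e_j\}$ of $\H_\chi$ satisfies $K_\chi(z,z)=\sum_j|e_j(z)|^2$ pointwise, so $\int_F B_\chi\,d\mu=\sum_j\int_F|e_j|^2e^{-\chi}\,d\mu$ for every measurable $F$; since a finite orthonormal system can always be completed to a basis, this gives
\[
\int_F B_\chi\,d\mu=\sup\Big\{\sum_{j=1}^N\int_F|h_j|^2e^{-\chi}\,d\mu:\ h_1,\dots,h_N\ \text{a $\chi$-orthonormal system in }\H_\chi\Big\}.
\]
Put $E=\{\psi<\phi\}$. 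Using this formula on both sides of the desired inequality, it suffices to show that for every finite $\phi$-orthonormal system $h_1,\dots,h_N$ in $\H_\phi$ there is a $\psi$-orthonormal system $g_1,\dots,g_N$ in $\H_\psi$ with $\sum_j\int_E|g_j|^2e^{-\psi}\,d\mu\ge\sum_j\int_E|h_j|^2e^{-\phi}\,d\mu$.

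To produce the $g_j$ I would simply orthonormalise the $h_j$ inside $\H_\psi$ by the symmetric (square-root) rule $g_j:=\sum_k(G^{-1/2})_{jk}h_k$, where $G=(\langle h_k,h_l\rangle_\psi)$ is the $\psi$-Gram matrix. This is where — and essentially only where — the hypothesis $\phi\le\psi+C$ is needed: it makes $L^2(e^{-\phi}d\mu)\subseteq L^2(e^{-\psi}d\mu)$, so each $h_j$ lies in $\H_\psi$, and $G$ is positive definite (a linear dependence could only come from a combination supported on $\{\psi=+\infty\}$, a set disjoint from $E$, hence irrelevant). Writing $A'_{kl}=\int_E h_k\bar h_l e^{-\phi}\,d\mu$, $B'_{kl}=\int_{X\setminus E}h_k\bar h_l e^{-\phi}\,d\mu$ and $A_{kl}=\int_E h_k\bar h_l e^{-\psi}\,d\mu$, $B_{kl}=\int_{X\setminus E}h_k\bar h_l e^{-\psi}\,d\mu$, a direct computation turns the two sums into $\mathrm{tr}\,A'$ and $\mathrm{tr}(AG^{-1})$, and one has $A'+B'=I$, $G=A+B$, $A\succeq A'$, $B\preceq B'$ (the last two because $e^{-\psi}\ge e^{-\phi}$ on $E$ and $e^{-\psi}\le e^{-\phi}$ on $X\setminus E$).

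So everything comes down to the inequality $\mathrm{tr}(AG^{-1})\ge\mathrm{tr}\,A'$ for Hermitian positive matrices with $G=A+B$, $A'+B'=I$, $A\succeq A'$, $0\preceq B\preceq B'$, and this is the step I expect to carry the actual content: $A$ may be arbitrarily large (wherever $\psi\ll\phi$), so one cannot simply bound $G^{-1}$. My plan for it: from $G=A+B\preceq A+B'$ and the order-reversing property of matrix inversion, $\mathrm{tr}(AG^{-1})\ge\mathrm{tr}\big(A(A+B')^{-1}\big)$; then set $M:=A-A'\succeq0$, so that $A+B'=I+M$ and $A=A'+M$, and use the identity
\[
(A'+M)(I+M)^{-1}-A'=(I-A')\,M(I+M)^{-1}=B'\,M(I+M)^{-1}.
\]
Since $M(I+M)^{-1}\succeq0$ (a nonnegative function of $M\succeq0$) and $B'\succeq0$, and the trace of a product of two positive semidefinite matrices is nonnegative, this last matrix has nonnegative trace; hence $\mathrm{tr}\big(A(A+B')^{-1}\big)\ge\mathrm{tr}\,A'$, and the theorem follows. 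The remaining matters — convergence of the sums, reduction to finite systems, the harmless role of $\{\psi=+\infty\}$ — are routine bookkeeping.
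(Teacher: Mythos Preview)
Your argument is correct and follows a genuinely different route from the paper. The paper first reduces to the case where $\phi-\psi$ is bounded, then interpolates linearly by $\phi_t=\phi+t(\psi-\phi)$ and differentiates $t\mapsto\int_{\{\psi<\phi\}}B_{\phi_t}\,d\mu$ using a variation formula for Bergman kernels (Proposition~2.2); after a symmetrization the derivative becomes the double integral
\[
\int\!\!\int_{\{u(z)<0<u(w)\}}(u(w)-u(z))\,|K_t(z,w)|^2\,e^{-\phi_t(z)-\phi_t(w)}\,d\mu(z)\,d\mu(w),
\]
which is visibly nonnegative. You instead bypass the differentiation lemma entirely, reducing via the extremal description of $\int_F B_\chi\,d\mu$ to the finite-dimensional trace inequality $\mathrm{tr}(AG^{-1})\ge\mathrm{tr}\,A'$, which you dispatch with elementary matrix manipulations. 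Your route is shorter and more algebraic: it avoids the delicate dominated-convergence estimates in the proof of Proposition~2.2 and does not require the interpolating spaces $\H_{\phi_t}$ to make sense. The paper's route, however, produces the explicit derivative formula above, and this is precisely what is exploited in the next section to obtain \emph{strict} inequality in the holomorphic setting (Theorem~1.1), by showing the integrand is nonzero on a plurifine-open and hence positive-measure set; extracting strictness from your sup-and-trace argument would need separate work. Your handling of the possible degeneracy of $G$ is terse but sound: a null vector $v$ gives $\sum v_jh_j=0$ a.e.\ on $\{\psi<\infty\}\supseteq E$, so after a $\phi$-unitary change of basis one may drop it without altering either trace, reducing to $G\succ0$.
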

To prove the comparison principle  we need a, basically standard,  lemma on derivatives of Bergman kernels. 
\begin{prop} Let $\phi_t$ be a differentiable family of weight functions with uniformly bounded derivative with respect to $t$ . Put $K_t=K_{\phi_t}$. Then $K_t$ is differentiable with respect to $t$ . Let $\dot{K_t}$ and $\dot{\phi_t}$ be the derivatives of $K_t$ and $\phi_t$ with respect to $t$. Then for $z$ and $\zeta$ fixed 
\be
\dot{K_t}(z,\zeta)=\int_X \dot{\phi_t}K_t(z,w) K_t(w,\zeta) e^{-\phi_t(w)} d\mu(w).
\ee
Moreover, for the difference quotients we have, if $|\tau|\leq 1$,  
\be
|(K_{t+\tau}(z,z)-K_t(z,z))/\tau|\leq A K_t(z,z)
\ee
for some constant $A$ depending on the sup-norm of $\dot \phi$. 
\end{prop}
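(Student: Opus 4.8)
The plan is to reduce everything to the reproducing property of $K_t$, using the elementary remark that, since $\dot\phi_t$ is uniformly bounded, for $t$ in a bounded interval the weights $\phi_t$ differ only by a bounded amount; hence the norms $\|\cdot\|_t:=\|\cdot\|_{L^2(e^{-\phi_t}d\mu)}$ are mutually equivalent and the spaces $\H_{\phi_t}$ all coincide with one fixed norm-closed subspace $V$. (This coherence of the family $\H_{\phi_t}$ is implicit in the hypothesis, and is automatic in the intended applications, where $V$ is a space of holomorphic functions.) Abbreviate $u_t=K_t(\cdot,\zeta)\in V$ and $e_t^z=K_t(\cdot,z)\in V$, so that $K_t(z,\zeta)=u_t(z)=\langle u_t,e_t^z\rangle_t$ and $K_t(z,z)=\|e_t^z\|_t^2$, and recall the standard reproducing-kernel identity $K_s(z,z)=\sup\{|h(z)|^2:h\in V,\ \|h\|_s\le1\}$.

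I would first dispose of the difference-quotient estimate, which needs no differentiability. Put $C=\sup|\dot\phi|$; then $|\phi_{t+\tau}-\phi_t|\le C|\tau|$, so $e^{-C|\tau|}\|h\|_t^2\le\|h\|_{t+\tau}^2\le e^{C|\tau|}\|h\|_t^2$ for every $h\in V$. Feeding this into the extremal characterisation above yields $e^{-C|\tau|}K_t(z,z)\le K_{t+\tau}(z,z)\le e^{C|\tau|}K_t(z,z)$, and since $e^{C|\tau|}-1\le Ce^{C}|\tau|$ for $|\tau|\le1$ we get $|K_{t+\tau}(z,z)-K_t(z,z)|\le Ce^{C}|\tau|\,K_t(z,z)$, which is the asserted bound with $A=Ce^{C}$.

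For the variational formula, the crucial identity is again a consequence of the reproducing property: for every $h\in V$ and all $s,t$ one has $h(\zeta)=\langle h,u_t\rangle_t=\langle h,u_s\rangle_s$, which after rearranging reads
\be
\langle h,\,u_t-u_s\rangle_s=\int_X h\,\bar u_t\,(e^{-\phi_s}-e^{-\phi_t})\,d\mu .
\ee
On dividing by $t-s$, the integrand tends pointwise to $h\,\bar u_t\,\dot\phi_t\,e^{-\phi_t}$ and, by the uniform bound on $\dot\phi_t$, is dominated by a constant multiple of $|h|\,|u_t|\,e^{-\phi_t}\in L^1$. Let $v_t\in V$ be the element furnished by the Riesz representation theorem so that $\langle h,v_t\rangle_t=\langle\dot\phi_t h,u_t\rangle_t$ for all $h\in V$ (legitimate since $\dot\phi_t h\in L^2(e^{-\phi_t}d\mu)$, even though $\dot\phi_t h\notin V$ in general). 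Substituting $h=(u_{t+\tau}-u_t)/\tau-v_t$ into the displayed identity and applying the Cauchy--Schwarz inequality together with dominated convergence gives $\|(u_{t+\tau}-u_t)/\tau-v_t\|_t\to0$ as $\tau\to0$; thus $t\mapsto u_t$ is differentiable as a curve in $V$, and hence pointwise since evaluation at a point is continuous, with $\dot u_t=v_t$. This is the asserted differentiability of $K_t$ (for $K_t(z,\cdot)$ use conjugate symmetry).

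It then only remains to test against $e_t^z$. Since $\dot u_t\in V$, the reproducing property, conjugate symmetry of the inner product, and the defining relation of $v_t=\dot u_t$ give $\dot K_t(z,\zeta)=\dot u_t(z)=\langle\dot u_t,e_t^z\rangle_t=\overline{\langle e_t^z,\dot u_t\rangle_t}=\overline{\langle\dot\phi_t e_t^z,u_t\rangle_t}$, that is,
\be
\dot K_t(z,\zeta)=\int_X\dot\phi_t(w)\,\overline{K_t(w,z)}\,K_t(w,\zeta)\,e^{-\phi_t(w)}\,d\mu(w),
\ee
and the Hermitian symmetry $\overline{K_t(w,z)}=K_t(z,w)$ turns this into $\int_X\dot\phi_t(w)\,K_t(z,w)\,K_t(w,\zeta)\,e^{-\phi_t(w)}\,d\mu(w)$, as claimed. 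The one genuinely technical point is the limit in the third step: one needs the difference quotients $(u_{t+\tau}-u_t)/\tau$ to converge in norm, not just weakly, and that is precisely where the domination in the displayed identity --- hence the uniform bound on $\dot\phi_t$ --- is used.
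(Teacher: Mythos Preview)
Your proof is correct, and both the key identity you display and the paper's formula (2.4) are the same reproducing--kernel computation, but you and the paper organise the passage to the limit differently.

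For the diagonal estimate (2.3), your route is shorter: you feed the two--sided bound $e^{-C|\tau|}\|h\|_t^2\le\|h\|_{t+\tau}^2\le e^{C|\tau|}\|h\|_t^2$ directly into the extremal characterisation of $K_s(z,z)$ and read off $|K_{t+\tau}(z,z)-K_t(z,z)|\le (e^{C|\tau|}-1)K_t(z,z)$. The paper instead derives (2.3) from the off--diagonal identity (2.4) together with Cauchy--Schwarz, which is less direct.

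For the variational formula (2.2), the paper stays at the level of kernels: it first establishes the $L^2$ bound $\int|K_{t+\tau}(z,\cdot)-K_t(z,\cdot)|^2 e^{-\phi_t}d\mu\le A|\tau|K_t(z,z)$ by iterating (2.4), and uses this to justify replacing $K_{t+\tau}$ by $K_t$ in (2.4) before letting $\tau\to0$. You instead define the candidate derivative $v_t$ via Riesz representation and show norm convergence of the difference quotient by substituting $h=h_\tau:=(u_{t+\tau}-u_t)/\tau-v_t$ into your identity and cancelling one factor of $\|h_\tau\|$ by Cauchy--Schwarz. This is a standard Hilbert--space manoeuvre and gives strong (not just pointwise) differentiability of $t\mapsto K_t(\cdot,\zeta)$ as a byproduct. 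One detail you pass over: the inner product on the left of your displayed identity is $\langle\cdot,\cdot\rangle_s=\langle\cdot,\cdot\rangle_{t+\tau}$, not $\langle\cdot,\cdot\rangle_t$, so after substituting $h=h_\tau$ one still has to compare $\langle h_\tau,v_t\rangle_{t+\tau}$ with $\langle h_\tau,v_t\rangle_t$; but their difference is $O(|\tau|\,\|h_\tau\|_t\,\|v_t\|_t)$ and is absorbed in the same Cauchy--Schwarz step.
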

\begin{proof}Note first that since $\dot \phi$ is bounded, $\phi_t-\phi_{t+\tau}$ is bounded for $|\tau|\leq 1$.  Hence $e^{-\phi_t}$ and $e^{-\phi_{t+\tau}}$ are of the same magnitude and it  follows from the extremal characterization of Bergman kernels that $K_t$ and $K_{t+\tau}$ are comparable as well. Let $\Delta(t,\tau)= e^{-\phi_t}-e^{-\phi_{t+\tau}}$. Since
$$
\Delta(t,\tau)=\int_0^\tau \dot\phi_s e^{-\phi_{t+s}}ds,
$$
$|\Delta(t,\tau)|\leq A |\tau|e^{-\phi_t}
$
if $|\tau|\leq 1$. Next note that by the reproducing property of Bergman kernels
\be
(K_{t+\tau}-K_t)(z,\zeta)=\int_X K_t(z,w)K_{t+\tau}(w,\zeta)(e^{-\phi_t}-e^{-\phi_{t+\tau}})d\mu(w).
\ee
Hence for $|\tau|\leq 1$
$$
|(K_{t+\tau}(z,z)-K_t(z,z))/\tau|\leq A\int_X |K_t(z,w)K_{t+\tau}(w,z)|e^{-\phi_t}d\mu(w).
$$
Since $\phi_t-\phi_{t+\tau}$ is bounded this is less than 
$$
 A'\left( \int_X |K_t(z,w)|^2 e^{-\phi_t}d\mu(w)+ \int_X |K_{t+\tau}(z,w)|^2 e^{-\phi_{t+\tau}}d\mu(w)\right)\leq A'' K_t(z,z),
$$
so we have proved (2.3). To prove (2.2) is very easy formally, just differentiating under the integral sign, but to prove that this is legitimate we have to work a bit more.  We first multiply (2.4) by its conjugate and integrate with respect to $\zeta$. Letting $f(z,\zeta):=
(K_{t+\tau}-K_t)(z,\zeta)$ we get
$$
\int |f(z,\zeta|^2 e^{-\phi_{t+\tau}}d\mu(\zeta)=
$$
$$
\int K_t(z,w)K_t(w',z)\Delta(t,\tau)(w)\Delta(t,\tau)(w')\int K_{t+\tau}(\zeta,w')K_{t+\tau}(w,\zeta) e^{-\phi_{t+\tau}(\zeta)}d\mu(\zeta)d\mu(w)d\mu(w').
$$
Using the reproducing property of Bergman kernels in the inner integral this is
$$
\int K_t(z,w)K_t(w',z) K_{t+\tau}(w,w')\Delta(t,\tau)(w)\Delta(t,\tau)(w')d\mu(w)d\mu(w').
$$
Next we apply (2.4) to the integral with respect to $w'$ and get
$$
\int f(w,z) K_t(z,w)\Delta(t,\tau)(w) d\mu(w).
$$
Then use that $|\Delta(t,\tau)|\leq |\tau| e^{-\phi_t}$ and apply Cauchy's inequality to get

\be
\int |f(z,\zeta)|^2 e^{-\phi_t}d\mu(\zeta)\leq A |\tau| K_t(z,z).
\ee
We are now finally ready to prove (2.2). By (2.4)
$$
 (K_{t+\tau}-K_t)(z,\zeta)/\tau=\int_X K_t(z,w)K_{t+\tau}(w,\zeta)(e^{-\phi_t}-e^{-\phi_{t+\tau}})/\tau d\mu(w).
$$
By (2.5) we may replace $K_{t+\tau}$ by $K_t$ in the integral. After that we let $\tau$ tend to zero and get (2.2) by dominated convergence. 

\end{proof}

We now turn to the proof of the comparison principle Theorem 2.1. We first claim that we may assume that $\phi-\psi$ is bounded. To see this, let $u:= \psi-\phi$ so that $u\geq -C$. Put $u_0:= \min (u, 0)$, $\psi_0=\phi+u_0$. Then $\psi_0\leq \psi$ and $\psi_0-\phi$ is bounded.  By the extremal characterization of Bergman kernels $K_{\psi_0}(z,z)\leq K_\psi(z,z)$. On the other hand, where $\psi<\phi$, $u<0$ so $u_0=u$. Hence  $\psi_0=\psi$ and $B_{\psi_0}\leq B_\psi$. Moreover $\psi<\phi$ if and only if $u<0$ which is equivalent to $u_0<0$, so $\psi<\phi$ if and only if $\psi_0<\phi$. Hence it suffices to prove the theorem for $\psi_0$ since then
$$
\int_{\psi<\phi} B_{\phi} d\mu\leq\int_{\psi_0<\phi} B_{\psi_0} d\mu\leq \int_{\psi<\phi} B_{\psi} d\mu.
$$ 

From now on we assume that $\phi-\psi$ is bounded and   let $\rho$ be a measurable function on $X$  such that 
$$
\int_X \rho(z)K_\phi(z,z) e^{-\phi} d\mu(z)<\infty. 
$$
  The same integral with $\phi$ replaced by $\psi$ is then also bounded. Let $\phi_t=\phi+tu$, so that $\phi_0=\phi$ and $\phi_1=\psi$. Then we claim that by 
Proposition 2.2, if  
$$
G(t):=\int_X \rho(z) B_{\phi_t} d\mu
$$
then
\be
G'(t)=\int_X -\rho(z)\dot{\phi_t}(z)K_t(z,z)e^{-\phi_t} d\mu + \int_X\int_X \rho(z) \dot{\phi_t}(w) K_t(z,w)K_t(w,z)e^{-\phi_t(z)-\phi_t(w)}d\mu(z)d\mu(w).
\ee
Again, this follows formally by the proposition and to justify the limit process we write
$$
(G(t+\tau)-G(t))= \int_X \rho K_t(e^{-\phi_{t+\tau}}-e^{-\phi_t})d\mu +
\int_X\rho(K_{t+\tau}-K_t)e^{-\phi_{t+\tau}}d\mu.
$$
When we divide by $\tau$ and let $\tau\to 0$ we see that the first term converges to the first term of (2.6) by dominated convergence. For the second term we use  (2.3) to conclude that we have dominated convergence in that integral as well.

In the first integral on the right hand side we insert the reproducing formula for the Bergman kernel
$$
K_t(z,z)=\int_X K_t(z,w)K_t(w,z)e^{-\phi_t(w)}d\mu(w)
$$
which changes the right hand side to
$$
\int_X\int_X
\rho(z) (\dot{\phi_t}(w)-\dot{\phi_t}(z)) K_t(z,w)K_t(w,z)e^{-\phi_t(z)-\phi_t(w)}d\mu(z)d\mu(w).
$$
We can write this more symmetrically as 
\be
(1/2)\int_X\int_X
(\rho(z)-\rho(w)) (\dot{\phi_t}(w)-\dot{\phi_t}(z)) |K_t(z,w)|^2 e^{-\phi_t(z)-\phi_t(w)}d\mu(z)d\mu(w).
\ee
Now recall that $\phi_t=\phi+tu$ so  $\dot{\phi_t}=u$. Let $\rho$ be the characteristic function of the set where $  \psi-\phi=u<0$. Then (2.7) becomes
$$ (1/2)\int\int_{\{u(z)<0<u(w)\}}(u(w)-u(z))|K_t(z,w)|^2 e^{-\phi_t(z)-\phi_t(w)}d\mu(z)d\mu(w)-
$$
$$
-(1/2)\int\int_{\{u(w)<0<u(z)\}}(u(w)-u(z))|K_t(z,w)|^2 e^{-\phi_t(z)-\phi_t(w)}d\mu(z)d\mu(w).
$$
Again using symmetry we get 
\be
\frac{d}{dt}\int_{u<0} B_{\phi_t} d\mu=\int\int_{\{u(z)<0<u(w)\}}(u(w)-u(z))|K_t(z,w)|^2 e^{-\phi_t(z)-\phi_t(w)}d\mu(z)d\mu(w).
\ee
Clearly this expression is non negative, so we have proved Theorem 2.1 under the assumption that 
$$
\int_X \rho(z)K_\phi(z,z) e^{-\phi} d\mu(z)<\infty, 
$$
i e that the left hand side of (2.1) is finite. If this does not hold, clearly the right hand side is infinite as well, so the (in)equality holds trivially.  
\qed

\noindent{\bf Remark:} Since the Bergman function $B_\phi(z)=K_\phi(z,z)e^{-\phi}$ does not change if we add a constant to $\phi$, we also have that
\be
\int_{\psi<\phi+c} B_\phi d\mu\leq\int_{\psi<\phi+c} B_\psi d\mu
\ee
for any constant $c$. One may note that it follows already from (2.7) that the derivative is non negative if $\rho= k(u)$ is an increasing function of $u=\dot\phi$. This fact is however equivalent to (2.9) for all values of $c$. We have chosen to write the derivative in the form (2.8) since this makes it easy to see when we have strict inequality (see the next section). 

\section{The proof of Theorem 1.1}

It is now an easy matter to deduce Theorem 1.1 from Theorem 2.1. First we note that  the setting of line bundles instead of scalar valued functions causes no extra difficulties. Indeed the proof goes through in the same way with only nominal changes. Alternatively, we could use that any line bundle has a discontinuous trivializing section and since continuity played no role in the proof, the line bundle case follows. It remains to prove that we have strict inequality if $B_\psi$ is non trivial and $\emptyset\neq\{\psi<\phi\}\neq X$. For this it suffices to show that the right hand side of (2.8) is strictly positive. But
$$
V:=\{(z,w); u(w)<0<u(z)\}
$$
is by assumption non empty. Moreover, this set is open for the plurifine topology and therefore has positive Lebesgue measure, \cite{BT}.  Hence it has positive $\mu$-measure  if $\mu$ is given by a strictly positive continuous density. From this it follows that if $K_t(z,w)$ is not identically zero, it is different from zero  almost everywhere on $V$, since it is holomorphic with respect to $z$ and $w$ (this is the only place we use holomorphicity). Hence the derivative of $G$ is strictly positive. 

Finally we give a 'maximum principle' for Bergman kernels which follows from Theorem 1.1 in the same way that the Monge-Amp\`ere maximum principle follows from the classical comparison principle.
\begin{thm} Under the same assumptions as in Theorem 1.1, let $\Omega\neq X$ be a subset of $X$ such that $B_\phi\geq B_\psi$ on $\Omega$. Assume that $\phi\leq\psi$ on $X\setminus \Omega$. Then $\phi\leq\psi$ everywhere.
\end{thm}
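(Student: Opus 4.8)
The plan is to argue by contradiction, so that essentially everything is reduced to Theorem 1.1 together with its strict inequality clause. Suppose $\phi\le\psi$ fails somewhere, i.e. $\{\psi<\phi\}\ne\emptyset$. Since $\phi\le\psi$ on $X\setminus\Omega$ we have $\{\psi<\phi\}\subseteq\Omega$, and as $\Omega\ne X$ this also forces $\{\psi<\phi\}\ne X$. Thus we are precisely in the situation $\emptyset\ne\{\psi<\phi\}\ne X$ to which the second assertion of Theorem 1.1 applies (the hypothesis $\phi\le\psi+C$ needed for its first assertion being part of ``the same assumptions as in Theorem 1.1'').

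Next I would play the two inequalities against each other. On one side, Theorem 1.1 gives
\[
\int_{\psi<\phi}B_\phi\,d\mu\le\int_{\psi<\phi}B_\psi\,d\mu .
\]
On the other side, $\{\psi<\phi\}\subseteq\Omega$ and $B_\phi\ge B_\psi\ge0$ on $\Omega$ give the reverse inequality. Hence the two integrals coincide, and by the last sentence of Theorem 1.1 this can only happen, in the situation $\emptyset\ne\{\psi<\phi\}\ne X$, if both integrals are zero or both are infinite; in every other case we already have the desired contradiction. So the proof is finished once these two degenerate possibilities are excluded.

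To handle them I would first carry out the reduction from the proof of Theorem 2.1: replace $\psi$ by $\psi_0:=\phi+\min(\psi-\phi,0)$, so that $B_{\psi_0}\le B_\psi$, $\{\psi_0<\phi\}=\{\psi<\phi\}$, on this set $B_{\psi_0}=B_\psi\le B_\phi$, and in addition $\phi-\psi_0$ is bounded; all hypotheses survive and now $B_\phi$ and $B_{\psi_0}$ are comparable, so the two integrals are zero, finite, or infinite together. The ``zero'' case is ruled out with the holomorphicity argument of Section 3: $\{\psi<\phi\}$ is nonempty and open for the plurifine topology, hence of positive Lebesgue measure on some connected component $X_0$ of $X$, and there $K_\phi(\cdot,\cdot)$, being holomorphic, either is identically zero (so the Bergman space is trivial over $X_0$ and there is nothing to prove there) or vanishes only on a set of measure zero, whence $\int_{\psi<\phi}B_\phi\,d\mu>0$.

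The ``infinite'' case is the one genuine obstacle: there the differentiation argument (2.6)--(2.8) behind Theorem 1.1, and in particular the strict positivity of the right-hand side of (2.8), no longer makes literal sense, exactly as in the final paragraph of the proof of Theorem 2.1. I expect to handle it either by observing that under the regularity assumed here it does not actually occur, or by an exhaustion/truncation reducing to the finite-mass case while keeping the fact that the integrand in (2.8) is supported on $\{u(z)<0<u(w)\}$, is pointwise non-negative there, and is strictly positive by holomorphicity; converting this into an honest strict lower bound for $\int_{\psi<\phi}(B_\psi-B_\phi)\,d\mu$ in the non-integrable regime is the only point that requires real care.
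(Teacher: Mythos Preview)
Your argument is essentially the paper's own proof. The paper's version is three lines: assume $U=\{\psi<\phi\}$ is nonempty, observe $U\subseteq\Omega$ and (since $\Omega\neq X$) $U\neq X$, and declare that this contradicts Theorem~1.1. Your first two paragraphs unpack exactly this contradiction, playing the hypothesis $B_\phi\ge B_\psi$ on $\Omega\supseteq U$ against the strict-inequality clause of Theorem~1.1, and that is all the paper does.

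The paper does not address the two degenerate cases (both integrals zero, or both infinite) that you go on to worry about; your last two paragraphs are additional caution beyond what the paper provides. Note, though, that your treatment of the zero case is not quite sound: if the Bergman space is genuinely trivial on a component then $B_\phi$ and $B_\psi$ both vanish there, the hypothesis $B_\phi\ge B_\psi$ becomes empty, and nothing in the argument forces $\phi\le\psi$ on that component --- so ``there is nothing to prove there'' is optimistic. The paper simply leaves these edge cases implicit.
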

\begin{proof} Assume the set $U$ where $\psi<\phi$ is nonempty. Then $U$ is a subset of $\Omega$, and $\Omega$ is not equal to $X$. This contradicts Theorem 1.1.
\end{proof}

 \def\listing#1#2#3{{\sc #1}:\ {\it #2}, \ #3.}

\end{document}